\theoremstyle{plain}
\newtheorem{theorem}{Theorem}[section]
\newtheorem{proposition}[theorem]{Proposition}
\newtheorem{lemma}[theorem]{Lemma}
\newtheorem{corollary}[theorem]{Corollary}
\newtheorem{claim}[theorem]{Claim}
\theoremstyle{definition}
\newtheorem{definition}[theorem]{Definition}
\newtheorem{example}[theorem]{Example}
\theoremstyle{remark}
\newtheorem{remark}[theorem]{Remark}
\numberwithin{equation}{section}
\renewcommand{\eqref}[1]{(\ref{#1})}
\newcommand{\field}[1]{\mathbb{#1}}
\newcommand{\C}{\field{C}}
\newcommand{\R}{\field{R}}
\newcommand{\Z}{\field{Z}}
\newcommand{\K}{\field{K}}
\newcommand{\id}{{\mathrm{id}}}
\begin{document}
\title{Dynamical Yang-Baxter maps
and Hopf algebroids
associated with s-sets}
\author{Noriaki Kamiya\thanks{Center for Mathematical Sciences,
University of Aizu, Aizuwakamatsu, Fukushima 9658580, Japan} 
\and Youichi Shibukawa\thanks{Department of Mathematics,
Faculty of Science, Hokkaido University, Sapporo 0600810, Japan
}}
\date{June 8, 2016}
\maketitle
\begin{abstract}
An s-set is an algebraic generalization of the regular s-manifold
introduced by Kowalski, one of the
generalized symmetric spaces in differential geometry.
We prove that suitable s-sets
give birth to dynamical Yang-Baxter maps,
set-theoretic 
solutions to a version of the quantum dynamical Yang-Baxter equation.
As an application, 
Hopf algebroids and rigid tensor categories are constructed
by means of these dynamical Yang-Baxter maps.
\footnote[0]{Keywords: Dynamical Yang-Baxter maps; Hopf algebroids;
S-sets; Rigid tensor categories; Homogeneous pre-systems.}
\footnote[0]{MSC2010: Primary 16T25, 18D10, 53C35, 81R50; Secondary 20N05, 20N10, 53C30.}
\end{abstract}
\section{Introduction}
The quantum 
Yang-Baxter equation \cite{baxter72,baxter78,yang67,yang68}
is closely related to algebraic structures,
for example, the quantum group \cite{drinfeld,jimbo}, 
the Hopf algebra \cite{abe,montgomery,sweedler}, 
and the triple system \cite{okubokamiya}. 
Analogously, the quantum dynamical Yang-Baxter equation 
\cite{felder,gervais},
a generalization of this equation,
produces Hopf algebroids 
\cite{bohm2005,bohm2004,day,lu,maltsiniotis,ravenel,schauenburg,xu}.
In fact,
Felder's dynamical R-matrix \cite{felder}, a solution to
the quantum dynamical Yang-Baxter equation, yields a Hopf algebroid
called the elliptic quantum group \cite{etingof}
through the Faddeev-Reshetikhin-Takhtajan
construction \cite{faddeev}.

In a similar way,
suitable dynamical Yang-Baxter maps
\cite{s2005,s20101}, 
set-theoretic solutions to
a version of the quantum dynamical Yang-Baxter equation
(see Remark \ref{pre:rem:qdybe}),
produce Hopf algebroids \cite{s2016,stakeuchi}.
This Hopf algebroid
implies two rigid tensor categories,
one of which is a category
consisting of finite-dimensional L-operators.

Our purpose is to find dynamical Yang-Baxter maps
giving birth to Hopf algebroids and rigid tensor categories.

Several studies clarified how to construct
the dynamical Yang-Baxter maps
\cite{matsumoto,matsumotos,s2005,s2007,s20102}. 
In \cite{kamiyas},
suitable homogeneous pre-systems 
produce 
the dynamical Yang-Baxter maps.
The homogeneous pre-system is a generalization of the homogeneous system
\cite{kikkawa} in differential geometry,
an algebraic feature of
the reductive homogeneous
space with suitable conditions.

It is natural to try to relate
geometric structures
to the construction of the dynamical Yang-Baxter map
that can provide with
Hopf algebroids and rigid tensor categories.

Kowalski 
\cite[Definition II.2]{kowalski}
presented a notion of the regular s-manifold,
a generalization of the symmetric space in the sense of \cite{loos}.
The regular s-manifold
is a $C^\infty$-manifold $M$ with a differentiable 
multiplication $M\times M\ni(x, y)\mapsto x\cdot y\in M$
such that
the maps $s_x: M\ni y\mapsto x\cdot y\in M$
satisfy the following:
\begin{enumerate}
\item
$s_x(x)=x$ for any $x\in M$;
\item
every map $s_x$ is a diffeomorphism;
\item
$s_x\circ s_y=s_{s_x(y)}\circ s_x$
for any $x, y\in M$;
\item
for each $x\in M$, the tangent map
${(s_x)_*}{}_x: T_x(M)\to T_x(M)$
has no fixed vector except the null vector.
\end{enumerate}

The aim of this paper is to construct Hopf algebroids
and rigid tensor categories
by introducing a notion of the s-set
(Definition \ref{tos:def:sset}),
a generalization of the regular s-manifold 
from the algebraic point of view.
In this construction, suitable s-sets give birth to 
the dynamical Yang-Baxter maps through 
the homogeneous pre-systems.
This paper gives another way to relate
the dynamical Yang-Baxter map
to differential geometry.

The organization of this paper is as follows.
In Section 2, we give a brief exposition of
the homogeneous pre-system, the dynamical Yang-Baxter map,
the Hopf algebroid,
and the rigid tensor category
\cite{kamiyas,s2016}.
Section 3 discusses the construction of ternary operations
by means of the s-sets.
In Section 4, we apply the results of Section 3 to
get the dynamical Yang-Baxter maps via the homogeneous pre-systems
from suitable s-sets. 
The last section, Section 5, is devoted 
to the study of the Hopf algebroids
associated with the above dynamical Yang-Baxter maps.
Each Hopf algebroid can produce
the rigid tensor category
consisting of finite-dimensional L-operators.
\section{Summary of homogeneous pre-systems, 
dynamical Yang-Baxter maps, Hopf algebroids,
and rigid tensor categories}
In this section, we summarize without proofs the relevant material on
homogeneous pre-systems, dynamical Yang-Baxter maps, Hopf algebroids,
and rigid tensor categories
\cite{kamiyas,s2016},
to render this paper as self-contained as possible.

We first introduce quasigroups
\cite{pflugfelder,smith}.
\begin{definition}\label{pre:def:quasigroup}
A nonempty set $Q$ with a binary operation 
$Q\times Q\ni(u, v)\mapsto uv\in Q$
is called a quasigroup,
iff:
\begin{enumerate}
\item
for any $u, w\in Q$, there uniquely exists $v\in Q$
such that $uv=w$;
\item
for any $v, w\in Q$, there uniquely exists $u\in Q$
such that $uv=w$.
\end{enumerate}
\end{definition}
That is to say, the left and the right translations on the quasigroup
are both bijective.
On account of this fact,
we define the map $\backslash: Q\times Q\to Q$ by
\begin{equation}\label{pre:eq:backslash}
v=u\backslash w
\Leftrightarrow uv=w
\quad(u, v, w\in Q).
\end{equation}

Any group is a quasigroup; on the other hand, the quasigroup 
is not always associative.
\begin{example}[\cite{s2016}]
Let $Q_5:=\{ 0, 1, 2, 3, 4\}$.
We define a binary operation on this set $Q_5$
by Table \ref{pre:table:q5}.
Here $0\,4=0$.
Each element of $Q_5$ appears once and only once in each row and 
in each column of Table \ref{pre:table:q5},
and this set $Q_5$ is hence a quasigroup
\cite[Theorem I.1.3]{pflugfelder}.
The binary operation on $Q_5$ is not associative, because
$(12)3=1\ne4=1(23)$.
\begin{table}[h]
\caption{The binary operation on $Q_5$ \cite{s2016}.}
\label{pre:table:q5}
\begin{center}
\begin{tabular}{r|rrrrr}
\hline
\ &0&1&2&3&4
\\\hline
0&4&3&2&1&0
\\
1&3&1&0&2&4
\\
2&0&2&3&4&1
\\
3&1&0&4&3&2
\\
4&2&4&1&0&3
\\\hline
\end{tabular}
\end{center}
\end{table}
\end{example}
\begin{definition}\label{pre:def:hps}
A pair $(S, \eta)$ of 
a nonempty set $S$ and a ternary operation 
$\eta: S\times S\times S\to S$
is called a 
homogeneous pre-system
\cite{kamiyas},
iff
the ternary operation $\eta$ satisfies:
for any $x, y, u, v, w\in S$,
\begin{align}
\nonumber
&\eta(x, y, x)=y;
\\\label{hps:eq:eta1}
&\eta(x, y, \eta(u, v, w))
=\eta(\eta(x, y, u), \eta(x, y, v), \eta(x, y, w)).
\end{align}
\end{definition}
This homogeneous pre-system
$(S, \eta)$
satisfying 
\begin{equation}\label{pre:eq:hps}
\eta(x, y, z)=\eta(w, \eta(x, y, w), z)
\quad(\forall x, y, z, w\in S),
\end{equation}
together with a suitable quasigroup,
can produce a dynamical Yang-Baxter map
\cite{kamiyas}.
Let $H$ and $X$ be nonempty sets
with a map $H\times X\ni(\lambda, x)\mapsto \lambda x\in H$.
\begin{definition}\label{pre:def:dybm}
A map $\sigma(\lambda): X\times X\to X\times X$
$(\lambda\in H)$ is a dynamical Yang-Baxter map,
iff
$\sigma(\lambda)$ satisfies
a version of the quantum dynamical Yang-Baxter equation
\begin{equation}\label{pre:eq:avbr}
\sigma_{12}(\lambda X^{(3)})\sigma_{23}(\lambda)
\sigma_{12}(\lambda X^{(3)})
=
\sigma_{23}(\lambda)
\sigma_{12}(\lambda X^{(3)})
\sigma_{23}(\lambda)
\quad(\forall\lambda\in H).
\end{equation}
Here, the maps
$\sigma_{12}(\lambda X^{(3)}), 
\sigma_{23}(\lambda): X\times X\times X\to
X\times X\times X$ are
defined by
\[
\sigma_{12}(\lambda X^{(3)})(x, y, z)
=
(\sigma(\lambda z)(x, y), z);
\sigma_{23}(\lambda)(x, y, z)=(x, \sigma(\lambda)(y, z)).
\]
\end{definition}
\begin{remark}\label{pre:rem:qdybe}
For a map
$\sigma(\lambda): X\times X\to X\times X$
$(\lambda\in H)$,
we set
$R(\lambda)(x, y):=\sigma(\lambda)(y, x)$
$(\lambda\in H, x, y\in X)$.
Then the following conditions are equivalent:
\begin{enumerate}
\item
the map $\sigma(\lambda)$ satisfies \eqref{pre:eq:avbr};
\item
the map $R(\lambda)$ satisfies
\begin{equation}\label{pre:eq:avqdybe}
R_{23}(\lambda)R_{13}(\lambda X^{(2)})R_{12}(\lambda)
=
R_{12}(\lambda X^{(3)})R_{13}(\lambda)R_{23}(\lambda X^{(1)})
\end{equation}
for any $\lambda\in H$
\cite[(2.1)]{s2005}.
\end{enumerate}
Throughout this paper,
both \eqref{pre:eq:avbr}
and \eqref{pre:eq:avqdybe} are called
versions of the
quantum dynamical Yang-Baxter equation
(see also Remark \ref{pre:rem:brvg}).
\end{remark}
Let $(S, \eta)$ be a homogeneous pre-system satisfying
\eqref{pre:eq:hps},
and let $Q$ be a quasigroup, isomorphic to $S$ as sets.
We denote by $\pi: Q\to S$ the (set-theoretic) bijection
that gives this isomorphism.
We define the ternary operation $\mu$ on 
$S$ by
\begin{equation}\label{pre:def:mu}
\mu(a, b, c)=\eta(b, a, c) \quad(a, b, c\in S).
\end{equation}
\begin{proposition}\label{pre:prop:mu}
The ternary operation $\mu$
satisfies\/$:$
\begin{align*}
&\mu(a, \mu(a, b, c), \mu(\mu(a, b, c), c, d))=\mu(a, b, \mu(b, c, d))
\quad(\forall a, b, c, d\in S);
\\
&\mu(\mu(a, b, c), c, d)=\mu(\mu(a, b, \mu(b, c, d)), \mu(b, c, d), d)
\quad
(\forall a, b, c, d\in S).
\end{align*}
\end{proposition}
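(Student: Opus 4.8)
The plan is to translate both identities into statements about $\eta$ through the defining relation \eqref{pre:def:mu}, $\mu(a,b,c)=\eta(b,a,c)$, and then to play off the two structural features of a homogeneous pre-system obeying \eqref{pre:eq:hps}. The first is the endomorphism property \eqref{hps:eq:eta1}: for fixed $x,y$ the map $z\mapsto\eta(x,y,z)$ respects $\eta$, and combined with $\eta(x,y,x)=y$ it lets one push an outer operator $\eta(x,y,-)$ through an inner ternary expression. The second is the base-point change relation \eqref{pre:eq:hps}, $\eta(x,y,z)=\eta(w,\eta(x,y,w),z)$, which trades the pair $(x,y)$ for the pair $(w,\eta(x,y,w))$ without altering the value. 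I expect \eqref{pre:eq:hps} to do most of the work, with \eqref{hps:eq:eta1} needed only once.

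I would dispatch the second identity first, as it needs only \eqref{pre:eq:hps}. Writing $p=\mu(a,b,c)=\eta(b,a,c)$, the left-hand side is $\mu(p,c,d)=\eta(c,\eta(b,a,c),d)$, which by \eqref{pre:eq:hps} (taking $w=c$) collapses to $\eta(b,a,d)$. Setting $q=\mu(b,c,d)=\eta(c,b,d)$ and $r=\mu(a,b,q)=\eta(b,a,q)$, the right-hand side is $\mu(r,q,d)=\eta(q,\eta(b,a,q),d)$, which by \eqref{pre:eq:hps} (now with $w=q$) likewise collapses to $\eta(b,a,d)$. Hence both sides equal $\eta(b,a,d)$.

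For the first identity, with $p=\eta(b,a,c)$ the left-hand side becomes $\eta(p,a,\eta(c,p,d))$ and the right-hand side becomes $\eta(b,a,\eta(c,b,d))$. I would transform the right-hand side by pushing the outer operator $\eta(b,a,-)$ through the inner $\eta(c,b,d)$ via \eqref{hps:eq:eta1}; since $\eta(b,a,c)=p$ and $\eta(b,a,b)=a$, this produces $\eta(p,a,\eta(b,a,d))$. A final application of \eqref{pre:eq:hps} (with $w=c$) rewrites $\eta(b,a,d)=\eta(c,p,d)$, so the right-hand side matches $\eta(p,a,\eta(c,p,d))$, the left-hand side. The main obstacle is purely bookkeeping: keeping the argument swap of \eqref{pre:def:mu} straight, and recognizing that \eqref{pre:eq:hps} is exactly the device that reconciles the two competing base points ($b$ versus $c$, and $b$ versus $q$), so that no injectivity or cancellation hypothesis on $\eta$ is required.
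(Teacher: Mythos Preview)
Your argument is correct. Both identities reduce exactly as you describe: for the second, two applications of \eqref{pre:eq:hps} (with $w=c$ and $w=q$) collapse each side to $\eta(b,a,d)$; for the first, one use of \eqref{hps:eq:eta1} together with $\eta(b,a,b)=a$ turns the right-hand side into $\eta(p,a,\eta(b,a,d))$, and a final instance of \eqref{pre:eq:hps} with $w=c$ matches it to the left-hand side $\eta(p,a,\eta(c,p,d))$.

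There is nothing to compare against in the present paper: Proposition~\ref{pre:prop:mu} appears in Section~2, which explicitly ``summarize[s] without proofs'' results from \cite{kamiyas,s2016}. The proposition is quoted from \cite{kamiyas} and no argument is given here. Your direct verification, using only the homogeneous pre-system axioms and \eqref{pre:eq:hps}, is the natural one and fills that gap cleanly.
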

For $\lambda\in Q$,
we define the map $\sigma(\lambda): Q\times Q\to Q\times Q$
by 
\[
\sigma(\lambda)(u, v)
=
(h(\lambda, v, u)\backslash((\lambda v)u), 
\lambda\backslash h(\lambda, v, u)).
\]
Here, $h(\lambda, v, u)=\pi^{-1}(\mu(\pi(\lambda), \pi(\lambda v),
\pi((\lambda v) u)))$
$(\lambda, u, v\in Q)$
and see \eqref{pre:eq:backslash}
for $h(\lambda, v, u)\backslash((\lambda v)u)$.
Proposition \ref{pre:prop:mu}
implies \eqref{pre:eq:avbr},
and
we have the following as a result.
\begin{proposition}\label{pre:prop:dybm}
The map $\sigma(\lambda)$ is a dynamical Yang-Baxter map.
\end{proposition}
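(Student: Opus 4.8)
The plan is to introduce \emph{partial-product coordinates} that linearize a single application of $\sigma(\lambda)$, and then to recognize the version of the quantum dynamical Yang-Baxter equation \eqref{pre:eq:avbr} as a direct transcription of the two identities in Proposition \ref{pre:prop:mu}. First I would record the invariants of one application of $\sigma$. Writing $\sigma(\lambda)(u, v) = (u', v')$ and unravelling the definition together with \eqref{pre:eq:backslash}, the second component gives $\lambda v' = h(\lambda, v, u)$ and the first gives $h(\lambda, v, u)\,u' = (\lambda v)u$; hence
\[
\pi(\lambda v') = \mu(\pi(\lambda), \pi(\lambda v), \pi((\lambda v)u)),
\qquad
(\lambda v')u' = (\lambda v)u .
\]
In words, $\sigma(\lambda)$ leaves the \emph{top product} $(\lambda v)u$ unchanged and sends the \emph{middle product} $\lambda v$ to the element whose $\pi$-image is $\mu(\pi(\lambda), \pi(\lambda v), \pi((\lambda v)u))$. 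Because the left and right translations of the quasigroup $Q$ are bijective, the pair $(u, v)$ is recovered uniquely from the triple of partial products $(\lambda, \lambda v, (\lambda v)u)$ with $\lambda$ fixed; this is the change of coordinates I will use.

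Next I would translate the threefold maps into these coordinates. Given a triple $(x, y, z)$ and a parameter $\lambda$, set $a = \pi(\lambda)$, $b = \pi(\lambda z)$, $c = \pi((\lambda z)y)$ and $d = \pi(((\lambda z)y)x)$, so that the triple corresponds to the flag $a \to b \to c \to d$ with $a$ and $d$ at the ends. Applying the invariants above, I would check that $\sigma_{23}(\lambda)$ replaces $b$ by $\mu(a, b, c)$ and fixes $a, c, d$ (here preservation of the top product $(\lambda z)y$ guarantees that $d$ is unchanged), while $\sigma_{12}(\lambda X^{(3)})$---whose base parameter is $\lambda$ times the current third coordinate, hence has $\pi$-image equal to the current $b$---replaces $c$ by $\mu(b, c, d)$ and fixes $a, b, d$. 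Thus in flag coordinates the two maps act only on the inner entries $b, c$, with $a$ and $d$ inert.

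Finally I would compute both sides of \eqref{pre:eq:avbr}. Running $\sigma_{12}(\lambda X^{(3)})\sigma_{23}(\lambda)\sigma_{12}(\lambda X^{(3)})$ from right to left sends the inner pair $(b, c)$ to
\[
\bigl(\mu(a, b, \mu(b, c, d)),\;
\mu(\mu(a, b, \mu(b, c, d)), \mu(b, c, d), d)\bigr),
\]
while $\sigma_{23}(\lambda)\sigma_{12}(\lambda X^{(3)})\sigma_{23}(\lambda)$ sends it to
\[
\bigl(\mu(a, \mu(a, b, c), \mu(\mu(a, b, c), c, d)),\;
\mu(\mu(a, b, c), c, d)\bigr).
\]
Equality of the first entries is precisely the first identity of Proposition \ref{pre:prop:mu}, and equality of the second entries is precisely its second identity; since $a$ and $d$ are fixed by both sides and the flag determines the triple, \eqref{pre:eq:avbr} follows for every $\lambda$.

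I expect the genuine work to lie in the second step---verifying cleanly that the top product $d$ survives an application of $\sigma_{23}$ and that the dynamical shift $X^{(3)}$ makes the base parameter of $\sigma_{12}$ coincide with the current middle product $b$---since this is where the interaction between the quasigroup structure and the dynamical parameter must be tracked carefully. Once the flag description is in place, the verification collapses to a single substitution into Proposition \ref{pre:prop:mu}.
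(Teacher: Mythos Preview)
Your proposal is correct and follows exactly the route the paper indicates: the text simply asserts that Proposition~\ref{pre:prop:mu} implies \eqref{pre:eq:avbr} (with details deferred to \cite{kamiyas}), and your flag/partial-product coordinates make that implication explicit. The two invariants you isolate and the resulting reduction of both sides of \eqref{pre:eq:avbr} to the two identities of Proposition~\ref{pre:prop:mu} are precisely what is needed.
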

Furthermore, 
we assume that this dynamical Yang-Baxter map $\sigma(\lambda)$
satisfies:
\begin{enumerate}
\item
the set $S$ is finite
(and so is the set $Q$);
\item
for any $b, c, d\in S$, there uniquely exists $a\in S$
such that
\begin{equation}\label{sufficient:uniquesol1}
\mu(a, b, c)=d;
\end{equation}
\item
for any $a, c, d\in S$, there exists a unique solution $b\in S$
to
$\eqref{sufficient:uniquesol1}$;
\item
for any $a, b, d\in S$, there exists a unique solution $c\in S$
to
$\eqref{sufficient:uniquesol1}$.
\end{enumerate}
This dynamical Yang-Baxter map $\sigma(\lambda)$
produces the Hopf algebroid $A_\sigma$
\cite[Sections 3 and 4]{s2016}.
We will briefly describe it as below. 

Let $\K$ be an arbitrary field,
and let $M_Q$ denote the $\K$-algebra
of all $\K$-valued maps on the set $Q$.
We define a map
$T_a: M_Q\to M_Q$ $(a\in Q)$ by
\begin{equation}\label{pre:eq:Ta}
T_a(f)(\lambda)=f(\lambda a)
\quad(f\in M_Q, \lambda, a\in Q).
\end{equation}
Let $L_{ab}, (L^{-1})_{ab}$
$(a, b\in Q)$ be indeterminates.
We define the set $AQ$ by
\[
AQ:=(M_Q\otimes_\K M_Q)\bigsqcup\{L_{ab}|a, b, \in Q\}\bigsqcup
\{(L^{-1})_{ab}|a, b, \in Q\}.
\]
$A_\sigma$ is the quotient of the free $\K$-algebra $\K\langle AQ\rangle$
on the set $AQ$ by two-sided ideal $I_\sigma$ whose generators are:
\begin{enumerate}
\item
$\xi+\xi'-(\xi+\xi')$,
$c\xi-(c\xi)$,
$\xi\xi'-(\xi\xi')$
$(c\in\K, \xi, \xi'\in M_Q\otimes_\K M_Q)$.

Here the symbol $+$ in $\xi+\xi'$ means the addition in the algebra 
$\K\langle AQ\rangle$,
while the symbol $+$ in $(\xi+\xi')(\in AQ)$ is the addition in the algebra
$M_Q\otimes_\K M_Q$.
The notations of the scalar products and products in the other generators 
are similar.
\item
$\displaystyle
\sum_{c\in Q}L_{ac}(L^{-1})_{cb}-\delta_{ab}\emptyset$,
$\displaystyle
\sum_{c\in Q}(L^{-1})_{ac}L_{cb}-\delta_{ab}\emptyset$
$(a, b\in Q)$.

Here $\delta_{ab}$ denotes  
Kronecker's delta symbol.
\item
$(T_{a}(f)\otimes 1_{M_Q})L_{ab}-L_{ab}(f\otimes 1_{M_Q})$,

$(1_{M_Q}\otimes T_{b}(f))L_{ab}-L_{ab}(1_{M_Q}\otimes f)$,

$(f\otimes 1_{M_Q})(L^{-1})_{ab}-(L^{-1})_{ab}
(T_{b}(f)\otimes 1_{M_Q})$,

$(1_{M_Q}\otimes f)
(L^{-1})_{ab}-(L^{-1})_{ab}
(1_{M_Q}\otimes T_{a}(f))$
$(f\in M_Q, a, b\in Q)$.

Here
$1_{M_Q}$
defined by $1_{M_Q}(\lambda)=1$ $(\lambda\in Q)$ is the unit
of $M_Q$
(for $T_a$, see \eqref{pre:eq:Ta}).
\item 
$\sum_{x, y\in Q}(\sigma^{xy}_{ac}\otimes1_{M_Q})L_{yd}L_{xb}
-
\sum_{x, y\in Q}(1_{M_Q}\otimes\sigma^{bd}_{xy})L_{cy}L_{ax}$
$(a, b, c, d\in Q)$.

Here
$\sigma^{xy}_{ac}\in M_Q$ is defined by
\[
\sigma^{xy}_{ac}(\lambda)
=\begin{cases}
1,&\mbox{if $\sigma(\lambda)(x, y)=(a, c)$};
\\
0,&\mbox{otherwise}.
\end{cases}
\]
\item
$\emptyset-1_{M_Q}\otimes 1_{M_Q}$.
\end{enumerate}
\begin{proposition}\label{pre:prop:hopfalgebroid}
This algebra $A_\sigma$ is a Hopf algebroid.
\end{proposition}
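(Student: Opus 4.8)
The plan is to equip $A_\sigma$ with all the data of a Hopf algebroid over the base algebra $R := M_Q$ and then check the axioms, following the dynamical analogue of the Faddeev--Reshetikhin--Takhtajan construction. First I would record the bialgebroid base structure. The two $\K$-algebra embeddings $s\colon f \mapsto f\otimes 1_{M_Q}$ and $t\colon f \mapsto 1_{M_Q}\otimes f$ of $M_Q$ into the tensor factor $M_Q\otimes_\K M_Q$ play the roles of source and target; relations~(1) and~(5) guarantee that this tensor factor sits inside $A_\sigma$ as the algebra $M_Q\otimes_\K M_Q$ itself, with $\emptyset$ as unit, so that $s$ and $t$ are commuting algebra maps. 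Relations~(3), written through the shift operators $T_a$ of \eqref{pre:eq:Ta}, then encode exactly how $L_{ab}$ and $(L^{-1})_{ab}$ intertwine the left and right $M_Q$-actions, which is what makes $A_\sigma$ an $R$-bimodule compatible with its multiplication.

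Next I would define the candidate comultiplication $\Delta\colon A_\sigma\to A_\sigma\otimes_R A_\sigma$ and counit $\varepsilon\colon A_\sigma\to R$ by the matrix formulas $\Delta(L_{ab})=\sum_{c\in Q}L_{ac}\otimes L_{cb}$ and $\varepsilon(L_{ab})=\delta_{ab}1_{M_Q}$, extended to the base $M_Q\otimes_\K M_Q$ in the standard bialgebroid fashion; the values on $(L^{-1})_{ab}$ are then forced by the invertibility relations~(2). The core of the argument is to show that $\Delta$ and $\varepsilon$ are well defined, i.e.\ that they send every generator of the two-sided ideal $I_\sigma$ into the kernel of the quotient map on $A_\sigma\otimes_R A_\sigma$, resp.\ into $0$. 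For the families~(1),~(2),~(3),~(5) this is a direct, if tedious, verification. The decisive check is that $\Delta$ annihilates the $RLL$-type family~(4): expanding $\Delta$ of the two sums and collecting coefficients reduces the claim precisely to the identity \eqref{pre:eq:avbr} satisfied by $\sigma(\lambda)$, where the dynamical shift $\lambda X^{(3)}$ is accounted for by the commutation relations~(3). Coassociativity and the counit axioms then follow formally from the matrix form of $\Delta$ and $\varepsilon$.

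Finally I would construct the antipode. I would define a $\K$-algebra anti-homomorphism $\mathcal{S}\colon A_\sigma\to A_\sigma$ by $\mathcal{S}(L_{ab})=(L^{-1})_{ba}$ and $\mathcal{S}((L^{-1})_{ab})=L_{ba}$, together with the flip $s\leftrightarrow t$ on the base, and verify that $\mathcal{S}$ respects $I_\sigma$ and satisfies the Hopf algebroid antipode axioms. It is precisely here that the finiteness of $S$ (hence of $Q$) and the unique-solvability conditions~(2)--(4) for $\mu$ are indispensable: they make $\sigma(\lambda)$ and the maps derived from it bijective, which is what allows the finite sums $\sum_{c\in Q}L_{ac}(L^{-1})_{cb}$ in relation~(2) to express $L$ as a genuinely invertible $Q\times Q$ matrix, and hence $\mathcal{S}$ to be well defined.

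I expect the main obstacle to be the well-definedness of $\Delta$ on family~(4) and the verification of the antipode axioms, both in the presence of the dynamical shifts. In the non-dynamical FRT setting these steps are almost formal, but here the target $A_\sigma\otimes_R A_\sigma$ is a tensor product over the \emph{noncommutative} base $M_Q$, so $\Delta$ must be an $R$-bimodule map landing in the Takeuchi $\times_R$-product, and the shift operators $T_a$ must be threaded correctly through every coefficient. Controlling this bookkeeping---rather than any single deep identity---is the technical heart of the proof; the underlying algebraic input is supplied entirely by Proposition~\ref{pre:prop:dybm} via \eqref{pre:eq:avbr}.
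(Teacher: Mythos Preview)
The paper does not prove this proposition: it appears in Section~2, which the authors explicitly describe as a summary ``without proofs'' of results from \cite{kamiyas,s2016}, and the sentence immediately before the proposition attributes the construction and the Hopf algebroid structure of $A_\sigma$ to \cite[Sections~3 and~4]{s2016}. So there is no in-paper argument to compare your sketch against; Proposition~\ref{pre:prop:hopfalgebroid} is simply a citation.

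That said, your outline matches the shape of the FRT-type argument one expects in \cite{s2016,stakeuchi}: the matrix comultiplication $\Delta(L_{ab})=\sum_{c}L_{ac}\otimes L_{cb}$, the counit $\varepsilon(L_{ab})=\delta_{ab}$, and the reduction of well-definedness on the $RLL$ generators~(4) to the dynamical Yang--Baxter relation \eqref{pre:eq:avbr} are the right ingredients. Two points are worth flagging if you intend to carry this out in full. First, the counit of a left bialgebroid is in general not an algebra homomorphism, so extending $\varepsilon$ from generators to products requires the weaker bialgebroid counit axiom rather than multiplicativity; the phrase ``direct, if tedious, verification'' hides real content here. Second, the unique-solvability hypotheses on $\mu$ are used for more than the invertibility of the matrix $L$: they also guarantee the invertibility and weight-preservation properties of $\sigma(\lambda)$ needed for the antipode to respect relation~(4) and for $\Delta$ to land in the Takeuchi $\times_R$-product. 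These are exactly the places you already single out as delicate, so your plan has no evident gap.
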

For details of the Hopf algebroid including its definition, 
see \cite{bohm2005,bohm2004}.

This Hopf algebroid $A_\sigma$ produces the rigid tensor category
$\mathrm{Rep}_{\mathcal{V}_G}(\sigma)_f$
consisting of finite-dimensional L-operators
associated with the dynamical Yang-Baxter map $\sigma$
\cite{s2016,stakeuchi}.
For the definition of $\mathrm{Rep}_{\mathcal{V}_G}(\sigma)_f$,
we need a tensor category $\mathcal{V}_G$.
Here, $G$ is the opposite group of the group of 
all permutations on the set $Q$.

An object of the category $\mathcal{V}_G$ is 
a $G$-graded $\K$-vector space $V=\oplus_{\alpha\in G}V_\alpha$;
and its morphism $f: V\to W$ is a map $f: Q\to\mathrm{Hom}_\K(V, W)$
satisfying
\[
f(\lambda)V_\alpha\subset\bigoplus_{\beta\in G, \beta(\lambda)
=\alpha(\lambda)}
W_\beta
\]
for any $\lambda\in Q$,
where $\mathrm{Hom}_\K(V, W)$ is the $\K$-vector space of $\K$-linear
maps from $V$ to $W$.
In addition, the composition $fg$ of morphisms $f$ and $g$
is defined by
$(fg)(\lambda):=f(\lambda)\circ g(\lambda)$
$(\lambda\in Q)$.

$\mathcal{V}_G$ is a tensor category.
The tensor product $V\otimes W$
of objects $V=\oplus_{\beta\in G}V_\beta$
and
$W\oplus_{\gamma\in G}W_\gamma$ is
$V\otimes W=\oplus_{\alpha\in G}(V\otimes W)_\alpha$,
where
$(V\otimes W)_\alpha:=\oplus_{\beta, \gamma\in G, \alpha=\gamma\beta}
V_\beta\otimes_\K W_\gamma$.
Here, $\gamma\beta(\in G)$ is the multiplication of $\gamma$
and $\beta$ in the group $G$.
In addition, 
the tensor product $f\otimes g$ of 
morphisms
$f: U\to V$
and
$g: W\to Y$
is 
a map
$(f\otimes g)(\lambda)=\sum_{\alpha\in G}
(\sum_{\beta, \gamma\in G, \alpha=\gamma\beta}
(f\otimes g)(\lambda)_{\beta, \gamma})$
$(\lambda\in Q)$.
Here,
$(f\otimes g)(\lambda)_{\beta, \gamma}\in\mathrm{Hom}_\K
(U_\beta\otimes_\K W_\gamma, V\otimes Y)$
is defined by
\[
(f\otimes g)(\lambda)_{\beta, \gamma}(u_\beta\otimes w_\gamma)
:=f(\gamma(\lambda))(u_\beta)\otimes g(\lambda)(w_\gamma)
\quad(u_\beta\in U_\beta, w_\gamma\in W_\gamma).
\]
The unit $I$ is the $\K$-vector space $\K=\oplus_{\alpha\in G}I_\alpha$
with
\[
I_\alpha=\begin{cases}
\K,&\mbox{if $\alpha=1_G(=\id_Q)$};\\
\{ 0\},&\mbox{otherwise}.
\end{cases}
\]
The left and right unit constraints with respect to this unit $I$
are defined by
$l_V(\lambda)=\oplus_{\alpha\in G}l_V(\lambda)_\alpha$
and
$r_V(\lambda)=\oplus_{\alpha\in G}r_V(\lambda)_\alpha$
$(\lambda\in Q)$.
Here,
\begin{align*}
&l_V(\lambda)_\alpha: (I\otimes V)_\alpha=I_{1_G}\otimes_\K V_\alpha
(=\K\otimes_\K V_\alpha)\ni a\otimes v\mapsto av\in V_\alpha\subset V;
\\
&r_V(\lambda)_\alpha: (V\otimes I)_\alpha=V_\alpha\otimes_\K I_{1_G}
(=V_\alpha\otimes_\K \K)\ni v\otimes a\mapsto av\in V_\alpha\subset V.
\end{align*}

We are now in a position to construct 
the rigid tensor category $\mathrm{Rep}_{\mathcal{V}_G}(\sigma)_f$.
For $a\in Q$, we define $\mathrm{deg}(a)\in G$
by
$\mathrm{deg}(a)(\lambda):=\lambda a$
$(\lambda\in Q)$.
This definition is unambiguous, because $Q$ is a quasigroup.
Let $\K Q$ denote the $\K$-vector space with the basis $Q$.
This $\K Q$ is an object of $\mathcal{V}_G$,
since $\K Q=\oplus_{\alpha\in G} (\K Q)_\alpha$
and
\[
(\K Q)_\alpha=
\begin{cases}
\K a(\cong \K),&\mbox{if $\exists a\in G$ such that 
$\alpha=\mathrm{deg}(a)$};\\
\{ 0\},&\mbox{otherwise.}
\end{cases}
\]
This is well defined on account of the definition of
the quasigroup.
We can regard the map $\sigma(\lambda): Q\times Q\to Q\times Q$
$(\lambda\in Q)$
as a $\K$-linear map on $\K Q\otimes_\K\K Q$,
and this $\sigma: \K Q\otimes \K Q\to \K Q\otimes \K Q$ is a morphism
of the category $\mathcal{V}_G$.
\begin{remark}\label{pre:rem:brvg}
A version of the quantum dynamical Yang-Baxter equation
\eqref{pre:eq:avbr} for the dynamical Yang-Baxter map
$\sigma(\lambda)$
is exactly the same as the braid relation in the
category $\mathcal{V}_G$
\cite[Proposition 4.5]{s2016}.
\end{remark}

The object of $\mathrm{Rep}_{\mathcal{V}_G}(\sigma)_f$ is a pair
of an object $V\in\mathcal{V}_G$ of finite dimensions
and an isomorphism $L_V: V\otimes \K Q\to\K Q\otimes V$
of $\mathcal{V}_G$ satisfying
\[
(\sigma\otimes\id_V)(\id_{\K Q}\otimes L_V)(L_V\otimes\id_{\K Q})
=
(\id_{\K Q}\otimes L_V)(L_V\otimes\id_{\K Q})(\id_V\otimes\sigma).
\]
The morphism $f: (V, L_V)\to(W, L_W)$ is a morphism
$f: V\to W$ of $\mathcal{V}_G$
such that
$(\id_{\K Q}\otimes f)L_V=L_W(f\otimes \id_{\K Q})$.

The tensor product $(V, L_V)\boxtimes(W, L_W)$
is $(V\otimes W, (L_V\otimes\id_W)(\id_V\otimes L_W))$,
and
the tensor product of morphisms is exactly the same as
that of $\mathcal{V}_G$:
$f\boxtimes g:=f\otimes g$.
The unit is $(I, r_{\K Q}^{-1}l_{\K Q})$.

From \cite[Section 5]{s2016},
we have the following.
\begin{proposition}\label{pre:prop:rigidtensor}
$\mathrm{Rep}_{\mathcal{V}_G}(\sigma)_f$ is a rigid tensor category.
\end{proposition}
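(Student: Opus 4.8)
The plan is to establish the two halves of the claim in turn: first that $\mathrm{Rep}_{\mathcal{V}_G}(\sigma)_f$ is a tensor category, and then that every object is rigid. Throughout I would exploit the forgetful functor $U\colon\mathrm{Rep}_{\mathcal{V}_G}(\sigma)_f\to\mathcal{V}_G$ sending $(V, L_V)\mapsto V$ and $f\mapsto f$. This functor is faithful, so it lets me import essentially all of the coherence from the already-established tensor category $\mathcal{V}_G$, reducing each axiom to a compatibility statement about the attached L-operators.

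First I would check that $\boxtimes$ is well defined on objects. For $(V, L_V)$ and $(W, L_W)$ the operator $L_{V\boxtimes W}:=(L_V\otimes\id_W)(\id_V\otimes L_W)$ is a composite of isomorphisms of $\mathcal{V}_G$, hence an isomorphism; the real content is that it satisfies the defining hexagon relation with $\sigma$. I would verify this by a diagram chase on $V\otimes W\otimes\K Q\otimes\K Q$, expanding both $L_V$ and $L_W$ and commuting the two resulting copies of $\sigma$ past each other by means of the braid relation, which by Remark \ref{pre:rem:brvg} is exactly \eqref{pre:eq:avbr}. This is the computational core of the monoidal structure and the step most likely to hide subtleties about the order in which the $\sigma$-moves are applied.

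Next I would promote the structural data of $\mathcal{V}_G$ to $\mathrm{Rep}_{\mathcal{V}_G}(\sigma)_f$. I would first confirm that the proposed unit $(I, r_{\K Q}^{-1}l_{\K Q})$ is a legitimate object, i.e.\ that $r_{\K Q}^{-1}l_{\K Q}$ is an isomorphism of $\mathcal{V}_G$ obeying the L-operator relation with $\sigma$. Then I would show that the associativity constraint of $\mathcal{V}_G$ and the unit constraints $l,r$ are morphisms of $\mathrm{Rep}_{\mathcal{V}_G}(\sigma)_f$, that is, they intertwine the relevant L-operators; for instance the associator must carry $L_{(U\boxtimes V)\boxtimes W}$ to $L_{U\boxtimes(V\boxtimes W)}$. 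Because $U$ is faithful and the pentagon and triangle identities already hold in $\mathcal{V}_G$, the coherence axioms transport to $\mathrm{Rep}_{\mathcal{V}_G}(\sigma)_f$ automatically once these constraints are known to be morphisms there.

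Finally, for rigidity I would construct a dual of each finite-dimensional $(V, L_V)$. Taking the dual object $V^\ast$ in $\mathcal{V}_G$, available precisely because $V$ is finite-dimensional, I would equip it with the L-operator $L_{V^\ast}$ obtained from $L_V$ by transposing in the $V$-component and then inverting, so that $L_{V^\ast}$ is the contragredient of $L_V$; finiteness is indispensable here. I would then define evaluation $V^\ast\boxtimes V\to I$ and coevaluation $I\to V\boxtimes V^\ast$ from the canonical pairing and copairing of $V$, and verify that they are morphisms of $\mathrm{Rep}_{\mathcal{V}_G}(\sigma)_f$, i.e.\ that they intertwine the corresponding L-operators. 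The hard part will be exactly this last verification: establishing the L-operator compatibility of evaluation and coevaluation and deducing the two zigzag identities, which again rests on the invertibility of $\sigma$ and the braid relation of Remark \ref{pre:rem:brvg}. Once the zigzag identities are in place, $V^\ast$ is a dual of $V$ and $\mathrm{Rep}_{\mathcal{V}_G}(\sigma)_f$ is rigid.
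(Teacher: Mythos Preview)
The paper does not prove this proposition: Section~2 is explicitly a summary ``without proofs,'' and the line immediately preceding the proposition is ``From \cite[Section 5]{s2016}, we have the following.'' There is nothing in this paper to compare your argument against beyond a bare citation; you are in effect sketching a proof of a result the authors import wholesale.

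Your treatment of the tensor structure is standard and essentially correct. One small correction: the hexagon for $L_{V\boxtimes W}$ follows from the separate hexagons for $L_V$ and $L_W$ together with the interchange of operators acting on disjoint tensor factors; the braid relation \eqref{pre:eq:avbr} is not needed at that step (it is what makes $(\K Q,\sigma)$ itself an object of the category, not what makes $\boxtimes$ well defined).

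The rigidity argument, however, has a genuine gap. You propose to obtain $L_{V^\ast}$ by ``transposing in the $V$-component and then inverting,'' but the partial transpose of an isomorphism $L_V$ is not automatically invertible, and neither finite-dimensionality of $V$ nor invertibility of $\sigma$ as a map forces this. What is actually required is the extra hypothesis recorded just before Proposition~\ref{pre:prop:hopfalgebroid}: the unique-solution conditions (2)--(4) on $\mu$, which amount to invertibility of the partial transposes of $\sigma$. It is this rigidity of $\sigma$ that, through the defining L-operator relation, propagates to arbitrary $L_V$; in \cite{s2016} the same input appears as bijectivity of the antipode of $A_\sigma$. Your sketch should invoke conditions (2)--(4) explicitly and indicate how they make the candidate $L_{V^\ast}$ an isomorphism; as written, the dual object is not shown to exist.
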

\section{Ternary operations from s-sets}
Let $M$ be a non-empty set
and
$s_x: M\to M$
a bijection
for each $x\in M$.
An s-set $(M, \{ s_x\}_{x\in M})$ is a generalization of 
the regular s-manifold \cite{kowalski}
(see Introduction)
in the generalized symmetric spaces
from the algebraic point of view.
\begin{definition}\label{tos:def:sset}
A pair $(M, \{ s_x\}_{x\in M})$ is an s-set, iff
the bijections $s_x$ satisfy
\begin{equation}\label{tos:eq:symmetry}
s_x\circ s_y=s_{s_x(y)}\circ s_x
\end{equation}
for any $x, y\in M$.
\end{definition}
For a simple example, we note that any group $G$
makes an s-set $(G, \{ s_x\})$.
Here, the bijection $s_x$
$(x\in G)$ is defined by $s_x(y)=xyx^{-1}$
$(y\in G)$.

In this section,
we will show that 
every s-set $(M, \{ s_x\})$ can produce 
ternary
operations $\eta$ on the set $M$ satisfying $\eqref{hps:eq:eta1}$.

Let $I=(i_1, i_2, \ldots, i_l)\in\Z^l$ 
$(l\geq 1)$,
and we write 
\[
w_I(X, Y)=
\begin{cases}
X^{i_1}Y^{i_2}\cdots X^{i_{l-1}}Y^{i_l},&\mbox{if $l$ is even};\\
X^{i_1}Y^{i_2}\cdots Y^{i_{l-1}}X^{i_l},&\mbox{if $l$ is odd}.
\end{cases}
\]
Here, $w_I(X, Y)$ is an element of the quotient of the free algebra
on the set $\{ X, X^{-1}, Y, Y^{-1}\}$
by
the two-sided ideal whose generators are 
$XX^{-1}-1, X^{-1}X-1, YY^{-1}-1$,
and $Y^{-1}Y-1$.

Let $\eta_I$ denote the ternary operation on an s-set $(M, \{ s_x\})$ defined
by
\begin{equation}
\label{tos:eq:wI}
\eta_I(x, y, z)=w_I(s_x, s_y)(z)
\quad(x, y, z\in M).
\end{equation}
\begin{theorem}\label{hps:thm:eta1}
The ternary operation $\eta:=\eta_I$ satisfies $\eqref{hps:eq:eta1}$.
\end{theorem}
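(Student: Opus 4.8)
The plan is to verify the two defining conditions of $\eqref{hps:eq:eta1}$ directly from the definition $\eqref{tos:eq:wI}$, reducing everything to the single relation $\eqref{tos:eq:symmetry}$ that defines an s-set. The first condition, $\eta(x,y,x)=y$, appears to require some compatibility with the word $w_I$; but note that the statement of Theorem \ref{hps:thm:eta1} only claims that $\eta_I$ satisfies the \emph{second} identity $\eqref{hps:eq:eta1}$, not the first equation $\eta(x,y,x)=y$ from Definition \ref{pre:def:hps}. So I would focus entirely on the multiplicativity-type identity
\[
\eta(x, y, \eta(u, v, w))
=\eta(\eta(x, y, u), \eta(x, y, v), \eta(x, y, w)).
\]

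The central observation I would isolate first is a conjugation lemma: for any $x,y\in M$, the map $\eta(x,y,-)=w_I(s_x,s_y)$ is a bijection $\varphi:=w_I(s_x,s_y):M\to M$, and the key is to understand how $\varphi$ interacts with the family $\{s_a\}_{a\in M}$. From $\eqref{tos:eq:symmetry}$ one gets $s_x\circ s_y = s_{s_x(y)}\circ s_x$, which rearranges to the conjugation formula $s_x\circ s_y\circ s_x^{-1}=s_{s_x(y)}$. I expect this to extend, by an easy induction on word length, to the statement that for any element $g=w(s_{a_1},\dots)$ built from the $s_{a_i}$ and their inverses, $g\circ s_b\circ g^{-1}=s_{g(b)}$ for every $b\in M$; equivalently, each such $g$ is an automorphism of the s-set, carrying $s_b$ to $s_{g(b)}$. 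This single conjugation identity is the engine of the whole proof.

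Granting that lemma, the computation of the right-hand side becomes mechanical. Writing $\varphi=w_I(s_x,s_y)$, the right-hand side is $w_I(s_{\varphi(u)},s_{\varphi(v)})(\varphi(w))$, and by the conjugation identity $s_{\varphi(u)}=\varphi\circ s_u\circ\varphi^{-1}$ and $s_{\varphi(v)}=\varphi\circ s_v\circ\varphi^{-1}$. Substituting these into the word $w_I$ and telescoping the $\varphi^{-1}\varphi$ cancellations between adjacent letters, the whole expression collapses to $\varphi\circ w_I(s_u,s_v)\circ\varphi^{-1}(\varphi(w))=\varphi\bigl(w_I(s_u,s_v)(w)\bigr)$, which is exactly $\eta(x,y,\eta(u,v,w))$, the left-hand side. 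I would carry this out by first treating a single letter $s_u^{\,k}$ (checking $s_{\varphi(u)}^{\,k}=\varphi\circ s_u^{\,k}\circ\varphi^{-1}$, valid for negative $k$ since $\varphi$ is a bijection and the conjugation respects inverses), and then concatenating.

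The main obstacle, and the step deserving the most care, is the conjugation lemma itself for \emph{arbitrary} words, including negative exponents. The base relation $\eqref{tos:eq:symmetry}$ is stated only for a single $s_x\circ s_y$; extending it to $g\circ s_b\circ g^{-1}=s_{g(b)}$ for all word-built bijections $g$ requires an induction that must handle both left multiplication by $s_a$ and by $s_a^{-1}$. The inverse case $s_a^{-1}\circ s_b\circ s_a=s_{s_a^{-1}(b)}$ does not come for free from $\eqref{tos:eq:symmetry}$ and must be derived by applying the forward relation to the element $s_a^{-1}(b)$ in place of $b$ and then rearranging; once this inverse case is secured, the induction on word length closes cleanly. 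Everything after the lemma is bookkeeping, so I would invest the writeup primarily in stating and proving the conjugation identity crisply.
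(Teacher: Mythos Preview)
Your proposal is correct. Both your argument and the paper's hinge on the same key fact---the conjugation identity $g\circ s_b\circ g^{-1}=s_{g(b)}$ for any bijection $g$ built as a word in the $s_a$'s and their inverses---and both observe that the inverse case $s_a^{-1}\circ s_b\circ s_a=s_{s_a^{-1}(b)}$ must be derived by substitution into \eqref{tos:eq:symmetry}. The paper states this as Claim~\ref{hps:claim:s} (in the form $s_{w'(s_x,s_y)(u)}\circ w'(s_x,s_y)=w'(s_x,s_y)\circ s_u$) and proves it by induction on word length, exactly as you outline.

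Where you differ is in the top-level organization. The paper runs an outer induction on the length of $w_I$: it peels off one letter, applies the single-letter conjugation formula, invokes the telescoping lemma $w(ZXZ^{-1},ZYZ^{-1})=Zw(X,Y)Z^{-1}$, uses the induction hypothesis for the shorter word $w'$, and only then applies Claim~\ref{hps:claim:s}. You instead prove the conjugation lemma once for arbitrary words, apply it directly to $\varphi=w_I(s_x,s_y)$, and telescope in one stroke. Your route is shorter and avoids the outer induction entirely; the paper's route has the minor expository advantage that the reduction step is very explicit, but it does some work twice (the outer induction and the induction inside Claim~\ref{hps:claim:s} are structurally the same). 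Substantively, the two proofs are the same argument, with yours packaged more efficiently.
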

\begin{proof}
The proof 
is by induction on the length $l(w_I):=|i_1|+|i_2|+\cdots+|i_l|$ 
of the word $w_I(X, Y)$.

If $l(w_I)=0$, then $w_I$ is an empty word,
and
$\eta_I(x, y, z)=w_I(s_x, s_y)(z)=z$
as a result.
An easy computation shows \eqref{hps:eq:eta1}.

If $l(w_I)=1$, then $w_I(X, Y)=X, X^{-1}, Y, Y^{-1}$.
We give the proof only for the case that $w_I(X, Y)=X^{-1}$.
Because $w_I(X, Y)=X^{-1}$, $\eta_I(x, y, z)=s_x^{-1}(z)$.
By substituting $s_x^{-1}(u)$ into $y$ in \eqref{tos:eq:symmetry},
\begin{equation}\label{hps:eq:sxsu}
s_xs_{s_x^{-1}(u)}=s_us_x,
\end{equation}
and consequently,
the right-hand-side of \eqref{hps:eq:eta1} is $s_x^{-1}s_u^{-1}(w)$,
which is exactly the left-hand-side of \eqref{hps:eq:eta1}.

If $l(w_I)\geq 2$,
then there exists a word $w'(X, Y)$ whose length is less than
$l(w_I)$ such that
$w_I(X, Y)=Xw'(X, Y)$, or $w_I(X, Y)=X^{-1}w'(X, Y)$, or
$w_I(X, Y)=Yw'(X, Y)$, or $w_I(X, Y)=Y^{-1}w'(X, Y)$.
For example,
if $w_I(X, Y)=X^{-3}Y^2XY^8$,
then we set $w'(X, Y)=X^{-2}Y^2XY^8$,
which satisfies $w_I(X, Y)=X^{-1}w'(X, Y)$.

We prove only for the case that $w_I(X, Y)=X^{-1}w'(X, Y)$.
Since $w_I(X, Y)=X^{-1}w'(X, Y)$, 
$\eta_I(x, y, z)=s_x^{-1}w'(s_x, s_y)(z)$,
and the right-hand-side of \eqref{hps:eq:eta1} is
\begin{equation}\label{hps:eq:pr0}
s_{s_x^{-1}w'(s_x, s_y)(u)}^{-1}w'(s_{s_x^{-1}w'(s_x, s_y)(u)},
s_{s_x^{-1}w'(s_x, s_y)(v)})s_x^{-1}w'(s_x, s_y)(w).
\end{equation}
Substituting 
$w'(s_x, s_y)(u)$
into
$u$ in \eqref{hps:eq:sxsu}
gives
\begin{equation}\label{hps:eq:sformulas}
s_{s_x^{-1}w'(s_x, s_y)(u)}=s_x^{-1}s_{w'(s_x, s_y)(u)}s_x,
\end{equation}
and,
in the same manner, we can see that
$s_{s_x^{-1}w'(s_x, s_y)(v)}=s_x^{-1}s_{w'(s_x, s_y)(v)}s_x$.
Now 
\eqref{hps:eq:pr0} becomes
\begin{equation}\label{hps:eq:pr05}
s_{s_x^{-1}w'(s_x, s_y)(u)}^{-1}w'(s_x^{-1}s_{w'(s_x, s_y)(u)}s_x,
s_x^{-1}s_{w'(s_x, s_y)(v)}s_x)s_x^{-1}w'(s_x, s_y)(w).
\end{equation}
\begin{lemma}
Let $X, Y, Z, X^{-1}, Y^{-1}, Z^{-1}$
be indeterminates
satisfying 
\[
XX^{-1}=X^{-1}X=YY^{-1}=Y^{-1}Y=ZZ^{-1}=Z^{-1}Z=1,
\]
and let $w(X, Y)$ be a word of $X$, $X^{-1}$, $Y$, and $Y^{-1}$.
Then $w(ZXZ^{-1}, ZYZ^{-1})=Zw(X, Y)Z^{-1}$.
Here, we regard $w(X, Y)$ 
as an element of the quotient of the free algebra
on the set $\{ X, X^{-1}, Y, Y^{-1},$ $Z, Z^{-1}\}$
by
the two-sided ideal whose generators are $XX^{-1}-1$, $X^{-1}X-1$, 
$YY^{-1}-1$,
$Y^{-1}Y-1$, $ZZ^{-1}-1$, and $Z^{-1}Z-1$.
\end{lemma}
The proof of this lemma is obvious,
because $(ZX^{\pm 1}Z^{-1})^i=ZX^{\pm i}Z^{-1}$
for any $i\in\Z$.

On account of this lemma, \eqref{hps:eq:pr05} is
\begin{equation}\label{hps:eq:pr1}
s_{s_x^{-1}w'(s_x, s_y)(u)}^{-1}s_x^{-1}w'(s_{w'(s_x, s_y)(u)},
s_{w'(s_x, s_y)(v)})w'(s_x, s_y)(w).
\end{equation}
We define the ternary operation $\eta'$ on $M$ by
$\eta'(x, y, z)=w'(s_x, s_y)(z)$
$(x, y, z\in M)$.
Because of the fact that
\[
w'(s_{w'(s_x, s_y)(u)},
s_{w'(s_x, s_y)(v)})w'(s_x, s_y)(w)=\eta'(\eta'(x, y, u),
\eta'(x, y, v), \eta'(x, y, w)),
\]
the induction hypothesis,
and \eqref{hps:eq:sformulas},
\eqref{hps:eq:pr1} is
\begin{align}\nonumber
&s_{s_x^{-1}w'(s_x, s_y)(u)}^{-1}s_x^{-1}
\eta'(x, y, \eta'(u, v, w))
\\
=&\label{hps:eq:pr2}
s_x^{-1}
s_{w'(s_x, s_y)(u)}^{-1}w'(s_x, s_y)w'(s_u, s_v)(w).
\end{align}

We will prove the following claim later.
\begin{claim}\label{hps:claim:s}
For any $x, y, u\in M$,
\begin{equation}\label{hps:eq:s}
s_{w'(s_x, s_y)(u)}\circ w'(s_x, s_y)=w'(s_x, s_y)\circ s_u.
\end{equation}
\end{claim}
By virtue of this claim,
the right-hand-side of \eqref{hps:eq:pr2}
is
exactly the same as the left-hand-side of \eqref{hps:eq:eta1},
which is the desired conclusion.
\end{proof}
\begin{proof}[Proof of Claim $\ref{hps:claim:s}$]
The proof is similar to that of the theorem;
we will prove it 
by induction on the length $l(w')$ of the word $w'(X, Y)$.

The proof of the cases that $l(w')=0, 1$ is obvious.

If $l(w')\geq 2$,
then there exists a word $w''(X, Y)$ whose length is less than
$l(w')$ such that
$w'(X, Y)=Xw''(X, Y)$, or $w'(X, Y)=X^{-1}w''(X, Y)$, or
$w'(X, Y)=Yw''(X, Y)$, or $w'(X, Y)=Y^{-1}w''(X, Y)$.
We prove only for the case $w'(X, Y)=X^{-1}w''(X, Y)$.
Since $w'(X, Y)=X^{-1}w''(X, Y)$, 
the left-hand-side of \eqref{hps:eq:s} is
\begin{equation}\label{hps:eq:sw'}
s_{s_x^{-1}w''(s_x, s_y)(u)}s_x^{-1}w''(s_x, s_y).
\end{equation}
By substituting 
$w''(s_x, s_y)(u)$
into $u$ in \eqref{hps:eq:sxsu},
\eqref{hps:eq:sw'} is 
$s_x^{-1}s_{w''(s_x, s_y)(u)}w''(s_x, s_y)$.
By the induction hypothesis,
this is
$s_x^{-1}w''(s_x, s_y)s_u=w'(s_x, s_y)s_u$,
which is exactly
the right-hand-side of \eqref{hps:eq:s}.
This establishes the formula.
\end{proof}
\section{Dynamical Yang-Baxter maps from s-sets}
\label{hpss}
This section is devoted to the construction
of the dynamical Yang-Baxter maps (Definition \ref{pre:def:dybm})
via
the homogeneous pre-systems
(Definition \ref{pre:def:hps})
by means of suitable s-sets (Definition \ref{tos:def:sset}).

Let $R$ be a ring with the unit $1(\ne 0)$,
$M$ a left $R$-module,
and $r$
an invertible element of the ring $R$.
We define $s_x: M\to M$ $(x\in M)$ by
\[
s_x(y)=(1-r)x+r y\quad(y\in M).
\]
\begin{proposition}\label{hpss:prop:s-set}
$(M, \{ s_x\})$ is an s-set.
\end{proposition}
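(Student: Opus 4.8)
The plan is to verify the two requirements encoded in Definition \ref{tos:def:sset}: that each $s_x$ is a bijection of $M$, and that the family $\{ s_x\}$ satisfies the symmetry relation \eqref{tos:eq:symmetry}. Both follow from a direct computation using the left $R$-module axioms together with the invertibility of $r$.

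First I would confirm bijectivity. Since $r$ is invertible in $R$, the equation $z=(1-r)x+ry$ can be solved uniquely for $y$, giving $y=r^{-1}\bigl(z-(1-r)x\bigr)$. Hence $s_x$ admits the explicit two-sided inverse $s_x^{-1}(z)=r^{-1}z-r^{-1}(1-r)x$, so each $s_x$ is a bijection. This also places the construction inside the standing setup preceding Definition \ref{tos:def:sset}.

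Next I would establish \eqref{tos:eq:symmetry} by evaluating both sides on an arbitrary $z\in M$. Expanding the left-hand side gives
\[
s_x\bigl(s_y(z)\bigr)=(1-r)x+r(1-r)y+r^2 z,
\]
where I use the left-module associativity $r\bigl((1-r)y\bigr)=\bigl(r(1-r)\bigr)y$ and $r(rz)=r^2 z$. For the right-hand side, writing $w=s_x(y)=(1-r)x+ry$, I get
\[
s_w\bigl(s_x(z)\bigr)=(1-r)^2 x+(1-r)r\,y+r(1-r)x+r^2 z.
\]
It then remains to compare the coefficients of $x$, $y$, and $z$ on the two sides.

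The only algebraic facts needed — and the point I would highlight — are the identity $(1-r)^2+r(1-r)=\bigl((1-r)+r\bigr)(1-r)=1-r$, which collapses the coefficient of $x$ on the right to $1-r$ by right-factoring (so no commutativity is even required there), and the commutation $r(1-r)=(1-r)r$, which matches the coefficients of $y$ and holds trivially because the central unit $1$ commutes with $r$. With these, both expressions equal $(1-r)x+r(1-r)y+r^2 z$, which proves \eqref{tos:eq:symmetry}. There is no genuine obstacle here: the computation is routine, and the only mild care required is in applying the left-module rule $r(sz)=(rs)z$ correctly and in keeping the scalar order straight, since $R$ is not assumed commutative.
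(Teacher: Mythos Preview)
Your proof is correct and follows the same route as the paper: exhibit the explicit inverse of $s_x$ (your formula $r^{-1}z-r^{-1}(1-r)x$ simplifies to the paper's $(1-r^{-1})x+r^{-1}z$) and then verify \eqref{tos:eq:symmetry} by direct computation, which the paper simply declares ``a simple matter.'' Your added care about scalar order in the non-commutative ring $R$ is appropriate and does not diverge from the paper's intent.
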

In fact, the inverse of the map $s_x$
is
\[
s_x^{-1}(y)=(1-r^{-1})x+r^{-1} y,
\]
and
it is a simple matter to show \eqref{tos:eq:symmetry}.

Let $I=(i_1, i_2, \ldots, i_l)\in\Z^l$
$(l\geq 2)$.
We denote by $\Phi_I(X)$ the following polynomial
of the variables $X$ and $X^{-1}$.
\begin{equation}
\label{hpss:eq:phi}
\Phi_I(X)=
\begin{cases}
1+\sum_{j=1}^{l}(-1)^jX^{\sum_{m=1}^ji_m},&\mbox{if $l$ is even};\\
1+\sum_{j=1}^{l-1}(-1)^jX^{\sum_{m=1}^ji_m},&\mbox{if $l$ is odd}.
\end{cases}
\end{equation}

\begin{proposition}\label{hpss:prop:eta}
$\eta_I(x, y, z)=(\Phi_I(r)-r^d)x+(1-\Phi_I(r))y+r^dz$
for any $x, y, z\in M$.
Here, $\eta_I(x, y, z)$ is defined in $\eqref{tos:eq:wI}$
and
$d:=i_1+i_2+\cdots+i_l$.
\end{proposition}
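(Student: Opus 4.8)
The plan is to reduce everything to the observation that each power $s_x^n$ is an affine map with a transparent closed form, and then expand the word $w_I(s_x,s_y)$ by a single telescoping computation. First I would establish that for every $n\in\Z$,
\[
s_x^n(z)=(1-r^n)x+r^nz\qquad(x,z\in M).
\]
For $n\geq 0$ this is a direct induction: assuming it for $n$, one has $s_x^{n+1}(z)=s_x(s_x^n(z))=(1-r)x+r((1-r^n)x+r^nz)=(1-r^{n+1})x+r^{n+1}z$, using only that powers of $r$ commute (they lie in the commutative subring of $R$ generated by $r$), so no commutativity of $R$ itself is required. For $n<0$ the same identity holds because the given inverse $s_x^{-1}(z)=(1-r^{-1})x+r^{-1}z$ has exactly the same shape with $r$ replaced by $r^{-1}$; iterating it gives $s_x^{-m}(z)=(1-r^{-m})x+r^{-m}z$.

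Next I would expand the composition directly. Write $w_I(s_x,s_y)=T_1T_2\cdots T_l$, where $T_k=s_x^{i_k}$ for $k$ odd and $T_k=s_y^{i_k}$ for $k$ even (matching the alternating pattern in the definition of $w_I$), and set $a_k:=x$ for $k$ odd, $a_k:=y$ for $k$ even, so that each factor acts by $T_k(w)=(1-r^{i_k})a_k+r^{i_k}w$. Substituting from the outside in and introducing the partial sums $d_k:=i_1+\cdots+i_k$ (with $d_0=0$ and $d_l=d$), the composition telescopes to
\[
\eta_I(x,y,z)=\sum_{k=1}^l\bigl(r^{d_{k-1}}-r^{d_k}\bigr)a_k+r^{d}z,
\]
since the coefficient of $a_k$ is $\bigl(\prod_{m<k}r^{i_m}\bigr)(1-r^{i_k})=r^{d_{k-1}}-r^{d_k}$ and the coefficient of $z$ is $\prod_m r^{i_m}=r^d$.

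Finally I would collect the $x$- and $y$-terms by the parity of $k$ and match with $\Phi_I$. The coefficient of $z$ is already $r^d$, as claimed. The coefficient of $y$ is the sum over even $k$,
\[
\sum_{\substack{1\leq k\leq l\\ k\ \mathrm{even}}}\bigl(r^{d_{k-1}}-r^{d_k}\bigr)=\sum_{j=1}^{L/2}\bigl(r^{d_{2j-1}}-r^{d_{2j}}\bigr),\qquad L:=2\lfloor l/2\rfloor,
\]
while from $\eqref{hpss:eq:phi}$ one computes $1-\Phi_I(r)=-\sum_{j=1}^{L}(-1)^jr^{d_j}=\sum_{j=1}^{L/2}\bigl(r^{d_{2j-1}}-r^{d_{2j}}\bigr)$, the upper limit being $L$ in both branches; hence the $y$-coefficient equals $1-\Phi_I(r)$. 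The $x$-coefficient is then forced: each $T_k$ contributes scalars summing to $1$, and this is preserved under composition (indeed $\sum_k(r^{d_{k-1}}-r^{d_k})+r^d$ telescopes to $1$), so the coefficient of $x$ is $1-r^d-(1-\Phi_I(r))=\Phi_I(r)-r^d$. I expect the only real obstacle to be the index bookkeeping: correctly aligning the parity convention (odd positions carry $x$, even positions $y$) with the telescoping indices, and checking that both cases of $\eqref{hpss:eq:phi}$ collapse to the single alternating sum with upper limit $L=2\lfloor l/2\rfloor$. Should the direct expansion prove awkward to present, an alternative is induction on $l(w_I)$ exactly as in the proof of Theorem \ref{hps:thm:eta1}, peeling off a leading $s_x^{\pm1}$ or $s_y^{\pm1}$; but the closed-form telescoping is the more transparent route.
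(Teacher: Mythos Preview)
Your argument is correct. Both your approach and the paper's rest on the same elementary lemma $s_x^{i}(z)=(1-r^{i})x+r^{i}z$ for all $i\in\Z$; the paper states this lemma, records its corollary $s_x^{i}s_y^{j}(z)=(1-r^{i})x+(r^{i}-r^{i+j})y+r^{i+j}z$ as the base case $l=2$, and then proceeds by induction on $l$, declaring the inductive step ``straightforward'' without writing it out. You instead unroll the whole composition $T_1\cdots T_l$ in one step into the telescoping sum $\sum_k(r^{d_{k-1}}-r^{d_k})a_k+r^{d}z$ and then match the parity--separated pieces against the alternating sum defining $\Phi_I$. The two routes have the same content; yours is more explicit and has the advantage of making transparent why the coefficient of $y$ is exactly $1-\Phi_I(r)$ (the even--indexed pairs $r^{d_{2j-1}}-r^{d_{2j}}$ reproduce the alternating sum), whereas the paper's induction hides this inside the unwritten inductive step. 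Your fallback suggestion (induction on the word length $l(w_I)=|i_1|+\cdots+|i_l|$) is a third variant, distinct from the paper's induction on the tuple length $l$, but unnecessary given that the direct telescoping already works cleanly.
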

\begin{proof}
The proof of the proposition is by induction on $l$.
For the proof of the $l=2$ case, we need
\begin{lemma}
For any integer $i$,
\[
s_x^i(z)=(1-r^i)x+r^iz
\quad(\forall x, z\in M).
\]
\end{lemma}
As a corollary of this lemma,
\[
s_x^is_y^j(z)=(1-r^i)x+(r^i-r^{i+j})y+r^{i+j}z
\quad(i, j\in\Z, x, y, z\in M),
\]
which
immediately induces the $l=2$ case.
The rest of the proof is straightforward.
\end{proof}
As a corollary, we find
\begin{corollary}\label{hpss:cor:hps}
If the invertible element $r\in R$ satisfies
that $\Phi_I(r)=0$ in $R$, then $(M, \eta)$ is a homogeneous pre-system
$($see Definition $\ref{pre:def:hps}$$)$
satisfying $\eqref{pre:eq:hps}$.
\end{corollary}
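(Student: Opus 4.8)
The plan is to verify directly that when $\Phi_I(r)=0$, the explicit formula for $\eta_I$ from Proposition \ref{hpss:prop:eta} collapses to a clean form, and then to check the two defining conditions of a homogeneous pre-system (Definition \ref{pre:def:hps}) together with condition \eqref{pre:eq:hps}. First I would substitute $\Phi_I(r)=0$ into the formula of Proposition \ref{hpss:prop:eta}, which gives
\[
\eta(x,y,z)=\eta_I(x,y,z)=-r^d x + y + r^d z = y + r^d(z-x),
\]
where $d=i_1+\cdots+i_l$. This simplified expression is the workhorse for everything that follows, and the whole corollary reduces to routine linear-algebra checks on this affine formula.

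Next I would verify the two axioms in Definition \ref{pre:def:hps}. The condition $\eta(x,y,x)=y$ is immediate from the simplified formula, since the term $r^d(x-x)$ vanishes. For the second axiom \eqref{hps:eq:eta1}, one route is to invoke Theorem \ref{hps:thm:eta1} directly: that theorem already establishes \eqref{hps:eq:eta1} for every $\eta_I$ on any s-set, and Proposition \ref{hpss:prop:s-set} guarantees that $(M,\{s_x\})$ is an s-set, so no further work is needed. (Alternatively, \eqref{hps:eq:eta1} can be re-derived from the affine formula by a short computation, but appealing to Theorem \ref{hps:thm:eta1} is cleaner.) Thus the pair $(M,\eta)$ satisfies both defining conditions and is a homogeneous pre-system.

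It remains to establish the extra relation \eqref{pre:eq:hps}, namely
\[
\eta(x,y,z)=\eta\bigl(w,\eta(x,y,w),z\bigr)\quad(\forall x,y,z,w\in S).
\]
I would compute the right-hand side using the affine formula twice. Writing $\eta(x,y,w)=y+r^d(w-x)$ and substituting into the outer $\eta$ gives
\[
\eta\bigl(w,\eta(x,y,w),z\bigr)=\eta(x,y,w)+r^d(z-w)=y+r^d(w-x)+r^d(z-w)=y+r^d(z-x),
\]
which equals $\eta(x,y,z)$. Here the key cancellation is that the $w$-dependent terms $r^d w$ and $-r^d w$ annihilate each other, precisely because the same scalar $r^d$ governs both the inner and outer applications. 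I expect this final verification to be the main (though still modest) point of the argument: everything hinges on the hypothesis $\Phi_I(r)=0$ forcing the coefficient of $x$ in $\eta(x,y,z)$ to be exactly $-r^d$, which is what makes the two occurrences of $r^d$ match and produces the required telescoping. This completes the proof.
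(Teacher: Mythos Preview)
Your proof is correct and follows essentially the same approach as the paper: the paper simply observes that $\Phi_I(r)=0$ reduces Proposition~\ref{hpss:prop:eta} to the formula $\eta_I(x,y,z)=-r^dx+y+r^dz$ (your equation \eqref{hpss:eq:etaI}) and declares the rest ``obvious,'' whereas you spell out the verifications of $\eta(x,y,x)=y$ and \eqref{pre:eq:hps} explicitly. Your appeal to Theorem~\ref{hps:thm:eta1} for axiom \eqref{hps:eq:eta1} is legitimate and is implicitly what the paper relies on as well.
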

The proof is obvious, since
\begin{equation}\label{hpss:eq:etaI}
\eta_I(x, y, z)=-r^dx+y+r^dz
\quad(x, y, z\in M),
\end{equation}
if $\Phi_I(r)=0$.
\begin{example}
Let $k(\geq 2)$ be a positive integer,
and
let $r(\in\C)$ be a primitive $k$-th root of unity.
We denote by $\Phi_k$ the cyclotomic polynomial
of level $k$.
If $k=6$, then
$\Phi_I(X):=\Phi_6(X)=1-X+X^2$
satisfies \eqref{hpss:eq:phi}
$(I=(1, 1))$.
Because $\Phi_I(r)=0$,
any $\C$-vector space $V$ produces
a homogeneous pre-system
satisfying $\eqref{pre:eq:hps}$
on account of Corollary \ref{hpss:cor:hps}.

If $k=10, 14, 15, 18,
20, 21, 22, 24, 26, 28, 33, 34, 35, 36, 38, 39, 40$,
then any primitive $k$-th root $r$ of unity can also give 
birth to a homogeneous pre-system
satisfying $\eqref{pre:eq:hps}$.
\end{example}
\begin{remark}
The s-set
$(V, \{ s_x\})$ 
for any finite-dimensional $\C$-vector space $V$
in the above example 
is
a regular s-manifold of order $k$
(see \cite[Definition II.43]{kowalski}).
Here, we regard $V$ as an $\R$-vector space of $2\dim_\C V$ dimensions.
\end{remark}
\begin{example}\label{hpss:example:finites-set}
Let $R:=\Z/5\Z$.
The invertible element $r:=2\in R$ is
a root of $\Phi_{(2, 1)}(X)=1-X^2+X^3$,
and hence $r$ with any (left) 
$R$-module $M$ gives a homogeneous pre-system
satisfying $\eqref{pre:eq:hps}$ according to Corollary \ref{hpss:cor:hps}.
In fact, $\eta_{(2, 1)}(x, y, z)=-3x+y+3z$.
In a similar fashion,
the invertible element
$3\in R$ also provides with
such a homogeneous pre-system,
because
$3$ is a root of $\Phi_{(2, -1)}(X)=1-X^2+X$.
\end{example}
\begin{example}\label{hpss:example:ordinarymu}
Let $R:=\Z/5\Z$.
The invertible element $r:=2\in R$ is
also a root of $\Phi_{(2, 1, 1)}(X)=\Phi_{(2, 1)}(X)=1-X^2+X^3$,
and hence $r$ 
with any (left) $R$-module $M$ gives a homogeneous pre-system
satisfying $\eqref{pre:eq:hps}$ according to Corollary \ref{hpss:cor:hps}.
In this case, $\eta_{(2, 1, 1)}(x, y, z)=-x+y+z$,
because $r^d=r^4=1$ in $R$.
On account of \eqref{pre:def:mu},
this $\eta_{(2, 1, 1)}$ yields the ternary operation $\mu$ 
in \cite[$(4.13)$]{s2016}.
\end{example}

Let $Q$ be a quasigroup
(Definition \ref{pre:def:quasigroup}), isomorphic to
the set $M$ as sets.
If the element $r(\in R)$ satisfies
$\Phi_I(r)=0$,
then it follows 
from Proposition \ref{pre:prop:dybm}
and Corollary \ref{hpss:cor:hps}
that this s-set $(M, \{ s_x\})$ with the quasigroup $Q$
gives birth to
the dynamical Yang-Baxter map
$\sigma(\lambda)$
(Definition \ref{pre:def:dybm}).
\section{Hopf algebroids from finite s-sets
and rigid tensor categories}
Let $R$ be a ring with the unit $1(\ne 0)$,
$M$ a left $R$-module,
$r(\in R)$
an invertible element
satisfying
$\Phi_I(r)=0$
(for $\Phi_I$, see \eqref{hpss:eq:phi}),
and
$Q$ a quasigroup, isomorphic to
the set $M$ as sets
(see Definition \ref{pre:def:quasigroup}).

In this last section, we will construct
Hopf algebroids by means of the dynamical Yang-Baxter maps
$\sigma(\lambda)$ in Section \ref{hpss}.
In order to define the Hopf algebroid,
we restrict our attention to the case that 
$M$ is finite
and that $|M|>1$
(it follows that $|Q|>1$).
We remark that 
$R=\Z/5\Z$ in Examples \ref{hpss:example:finites-set}
and \ref{hpss:example:ordinarymu}
is a field and that any (left) $R$-module $M$ is an $R$-vector space.
Hence, each $M(\ne\{ 0\})$ in Examples \ref{hpss:example:finites-set}
and \ref{hpss:example:ordinarymu}
of finite dimensions is finite and satisfies $|M|>1$.
\begin{proposition}\label{hafsrtc:prop:mu}
The ternary operation $\mu$ $\eqref{pre:def:mu}$
on $M$
satisfies\/$:$
\begin{enumerate}
\item[$(1)$]
for any $b, c, d\in M$, there exists a unique solution $a\in M$
to
$\eqref{sufficient:uniquesol1}$\/$;$
\item[$(2)$]
for any $a, c, d\in M$, there exists a unique solution $b\in M$
to
$\eqref{sufficient:uniquesol1}$\/$;$
\item[$(3)$]
for any $a, b, d\in M$, there exists a unique solution $c\in M$
to
$\eqref{sufficient:uniquesol1}$.
\end{enumerate}
\end{proposition}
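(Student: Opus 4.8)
The plan is to reduce all three statements to solving a single affine equation in a left $R$-module, where the relevant coefficient is an invertible element of $R$. First I would make $\mu$ explicit. Since $\Phi_I(r)=0$, Corollary \ref{hpss:cor:hps} gives $\eta_I(x,y,z)=-r^{n}x+y+r^{n}z$ with $n:=i_1+\cdots+i_l$, so that by \eqref{pre:def:mu},
\[
\mu(a,b,c)=\eta_I(b,a,c)=a-r^{n}b+r^{n}c
\qquad(a,b,c\in M).
\]
Thus $\mu$ is affine in each of its three arguments, with coefficient $1$ in the slot of $a$, coefficient $-r^{n}$ in the slot of $b$, and coefficient $r^{n}$ in the slot of $c$. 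The one fact I would isolate at the outset is that $r^{n}$ is an invertible element of $R$, its inverse being $(r^{-1})^{n}$; consequently the map $M\to M$, $m\mapsto r^{n}m$, is a bijection of $M$ with inverse $m\mapsto r^{-n}m$, and the same is true of $m\mapsto -r^{n}m$. This rests only on $R$ being a ring in which $r$ is invertible and $M$ a left $R$-module, not on any field or vector-space hypothesis.

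With this in hand, each of $(1)$--$(3)$ becomes immediate by solving $\mu(a,b,c)=d$ for the designated variable. For $(1)$ the coefficient of $a$ is $1$, so $a=d+r^{n}b-r^{n}c$ is the unique solution. For $(2)$, rearranging gives $r^{n}b=a+r^{n}c-d$, whence the invertibility of $r^{n}$ yields the unique solution $b=r^{-n}(a-d)+c$. For $(3)$, rearranging gives $r^{n}c=d-a+r^{n}b$, whence $c=r^{-n}(d-a)+b$ is the unique solution. In each case existence is witnessed by the displayed formula, and uniqueness follows because subtracting two solutions and applying $r^{-n}$ (or nothing, in case $(1)$) forces them to coincide.

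There is no genuine obstacle here: the entire content is the bijectivity of multiplication by $r^{n}$ on $M$. The single point worth care is the notational clash between the target $d\in M$ in \eqref{sufficient:uniquesol1} and the integer exponent $i_1+\cdots+i_l$ from Corollary \ref{hpss:cor:hps}; I avoid it above by writing the exponent as $n$. I would finally remark that conditions $(1)$--$(3)$ are exactly the three uniqueness hypotheses $(2)$--$(4)$ required just before Proposition \ref{pre:prop:hopfalgebroid} to build the Hopf algebroid $A_\sigma$, the remaining hypothesis (finiteness of $M$) being the standing assumption of this section; hence this proposition is precisely what licenses the constructions that follow.
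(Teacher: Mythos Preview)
Your proof is correct and follows exactly the paper's own approach: the paper simply remarks that the claim is clear from \eqref{hpss:eq:etaI} and the invertibility of $r$, and you have spelled out precisely this argument. Your renaming of the exponent to $n$ to avoid the clash with the target $d\in M$ is a sensible expository touch.
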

The proof is clear from \eqref{hpss:eq:etaI}
and 
the fact that the element $r$ is invertible in the ring $R$.

By virtue of Proposition \ref{pre:prop:hopfalgebroid},
we have the following.
\begin{theorem}
$A_\sigma$ is a Hopf algebroid.
\end{theorem}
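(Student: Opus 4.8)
The plan is to verify that the dynamical Yang-Baxter map $\sigma(\lambda)$ assembled in Section \ref{hpss} satisfies every hypothesis of Proposition \ref{pre:prop:hopfalgebroid}, and then to invoke that proposition directly. In other words, the whole statement reduces to matching the data produced by the s-set construction against the list of four conditions imposed just before Proposition \ref{pre:prop:hopfalgebroid}.

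First I would confirm that $\sigma(\lambda)$ really is a dynamical Yang-Baxter map. By the standing assumptions of this section, $r$ is an invertible element of $R$ with $\Phi_I(r)=0$, so Corollary \ref{hpss:cor:hps} guarantees that $(M,\eta)$, with $\eta=\eta_I$, is a homogeneous pre-system satisfying \eqref{pre:eq:hps}. Together with the fixed quasigroup $Q$ that is isomorphic to $M$ as sets, Proposition \ref{pre:prop:dybm} then yields the desired dynamical Yang-Baxter map $\sigma(\lambda)$, so that $A_\sigma$ is at least well defined.

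Next I would check the four conditions listed immediately before Proposition \ref{pre:prop:hopfalgebroid}, here with $S=M$. Condition (1), the finiteness of $M$ (hence of $Q$), is exactly the restriction imposed at the opening of this section. Conditions (2)--(4), asserting unique solvability of $\mu(a,b,c)=d$ in each of the three arguments $a$, $b$, and $c$ separately, are precisely the three assertions of Proposition \ref{hafsrtc:prop:mu}. With all four hypotheses in force, Proposition \ref{pre:prop:hopfalgebroid} applies verbatim and delivers that $A_\sigma$ is a Hopf algebroid.

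I do not anticipate a genuine obstacle at this final stage: the only substantive input is the unique-solvability of the affine-linear relation $\mu(a,b,c)=d$, which was already settled in Proposition \ref{hafsrtc:prop:mu} by reading off the explicit form \eqref{hpss:eq:etaI} and using the invertibility of $r$ (equivalently of $r^d$). The remaining effort is purely bookkeeping, namely confirming that the four-part hypothesis of Proposition \ref{pre:prop:hopfalgebroid} coincides with the finiteness assumption and Proposition \ref{hafsrtc:prop:mu}; once that alignment is made explicit, the conclusion is immediate.
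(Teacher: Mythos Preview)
Your proposal is correct and matches the paper's approach exactly: the paper simply records that, by the finiteness assumption on $M$ and by Proposition~\ref{hafsrtc:prop:mu}, all hypotheses preceding Proposition~\ref{pre:prop:hopfalgebroid} are in force, and then invokes that proposition to conclude. There is nothing more to add.
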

The homogeneous pre-system
$\eta_{(2, 1, 1)}$ in Example \ref{hpss:example:ordinarymu}
yields a Hopf algebroid $A_\sigma$ in \cite{s2016}.

Furthermore, we have the following from Proposition \ref{pre:prop:rigidtensor}.
\begin{theorem}
$\mathrm{Rep}_{\mathcal{V}_G}(\sigma)_f$
is a rigid tensor category.
\end{theorem}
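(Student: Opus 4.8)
The plan is to obtain this theorem as an immediate application of Proposition \ref{pre:prop:rigidtensor}, so the entire task reduces to checking that the dynamical Yang-Baxter map $\sigma(\lambda)$ at hand satisfies every hypothesis of that proposition. First I would recall the provenance of $\sigma(\lambda)$: under the standing assumption $\Phi_I(r)=0$, Corollary \ref{hpss:cor:hps} tells us that $(M, \eta)$ is a homogeneous pre-system satisfying \eqref{pre:eq:hps}, whence Proposition \ref{pre:prop:dybm} (through Proposition \ref{pre:prop:mu}) produces the dynamical Yang-Baxter map $\sigma(\lambda)$ from this s-set together with the quasigroup $Q$. Thus $\sigma(\lambda)$ is precisely a map of the type to which Proposition \ref{pre:prop:rigidtensor} applies, and it remains only to verify the four numbered conditions that precede Proposition \ref{pre:prop:hopfalgebroid}.

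The finiteness condition (that $S=M$, and hence $Q$, be finite) holds by the standing hypothesis of this section, in which we have restricted attention to finite $M$ with $|M|>1$. The three unique-solvability conditions for $\mu$ in \eqref{sufficient:uniquesol1} are exactly the statement of Proposition \ref{hafsrtc:prop:mu}. Concretely, since $\Phi_I(r)=0$ gives the explicit form \eqref{hpss:eq:etaI}, namely $\eta_I(x, y, z)=-r^d x+y+r^d z$, one has $\mu(a, b, c)=\eta(b, a, c)=a-r^d b+r^d c$ by \eqref{pre:def:mu}; solving $\mu(a, b, c)=d$ for any one of $a$, $b$, $c$ with the other two held fixed is then immediate because $r$, and hence $r^d$, is invertible in $R$. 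This is the content already recorded in Proposition \ref{hafsrtc:prop:mu}.

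Having confirmed all four hypotheses, I would invoke Proposition \ref{pre:prop:rigidtensor} to conclude that $\mathrm{Rep}_{\mathcal{V}_G}(\sigma)_f$ is a rigid tensor category. There is no genuine obstacle in this final step, since the substantive work, namely the construction of $\sigma(\lambda)$, the verification of the quantum dynamical Yang-Baxter equation, and the general rigidity statement, is already carried out in Sections 2 and 4 and in Proposition \ref{hafsrtc:prop:mu}. The only point requiring care is the bookkeeping: one must make sure that the map $\sigma$ appearing in Proposition \ref{pre:prop:rigidtensor} is identified with the one built here from the s-set, and that the finiteness assumption of this section supplies the first of the four conditions. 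Once these identifications are made explicit, the theorem follows at once.
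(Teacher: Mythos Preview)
Your proposal is correct and follows exactly the paper's approach: the paper simply states that the theorem follows from Proposition~\ref{pre:prop:rigidtensor}, the hypotheses of which (finiteness of $M$ and the three unique-solvability conditions of Proposition~\ref{hafsrtc:prop:mu}) have already been verified in the surrounding text. Your write-up spells out this verification in slightly more detail than the paper does, but the argument is the same.
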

In view of Proposition \ref{hafsrtc:prop:mu} (2)
(see \cite[Proposition 4.5]{s2016}),
$(\K Q, \sigma)\in\mathrm{Rep}_{\mathcal{V}_G}(\sigma)_f$,
and
this object
is not the unit of $\mathrm{Rep}_{\mathcal{V}_G}(\sigma)_f$,
because $|Q|>1$.
The rigid tensor category 
$\mathrm{Rep}_{\mathcal{V}_G}(\sigma)_f$
is hence non-trivial
(see \cite[Remark 3.9]{stakeuchi}).
\section*{Acknowledgments}
The second author was supported in part by KAKENHI (26400031).

\end{document}